\documentclass[a4paper,11pt]{article}
\usepackage[utf8]{inputenc}
\usepackage[T1]{fontenc}
\usepackage{lmodern}
\usepackage{microtype}
\usepackage[a4paper,textwidth=16cm,textheight=22cm,centering]{geometry}

\usepackage{mathtools}
\usepackage{amssymb,amsfonts}
\usepackage{amsthm}
\usepackage{setspace}
\usepackage{indentfirst}
\usepackage{enumitem}
\usepackage{float}
\usepackage{tikz}

\usepackage{cite}
\usepackage{xcolor}
\usepackage[ ]{hyperref}
\hypersetup{
	colorlinks=true,
	linktoc=all,
	linkcolor=red,
	citecolor=blue,
	urlcolor=black,
	pdftitle={Packing and Covering Cycles Through Prescribed Vertices},
	pdfauthor={Hanzhi Bai and Jin Yan}}

\theoremstyle{plain}
\newtheorem{theorem}{Theorem}[section]
\newtheorem{lemma}[theorem]{Lemma}

\newtheorem{question}[theorem]{Question}

\theoremstyle{definition}
\newtheorem{definition}[theorem]{Definition}

\theoremstyle{remark}

\newcommand{\cC}{\mathcal{C}}
\newcommand{\cQ}{\mathcal{Q}}

\begin{document}
	\date{}
	\begin{spacing}{1}

\title{Packing and Covering Cycles Through Prescribed Vertices}

\author{
Hanzhi Bai$^{1}$\and
Jin Yan$^{1}$\thanks{Corresponding author. E-mail: {\tt yanj@sdu.edu.cn.} (J.~Yan)
}}

\footnotetext[1]{{School of Mathematics, Shandong University, Jinan 250100, P.~R.~China.
Emails: {\tt bhz@mail.sdu.edu.cn} (H.~Bai).
Supported by the National Natural Science Foundation of China (Grant No.~12571373) and the Natural Science Foundation of Shandong Province (Grant No.~ZR2025MS05).}}
\maketitle

\begin{abstract}
Let $G$ be a finite simple graph and let $S\subseteq V(G)$.
We prove that the minimum number of vertices meeting every cycle that
intersects $S$ is at most the maximum number of vertices of $S$ covered by a
collection of vertex-disjoint cycles.  This answers a question posed by Bowler,
Ghorbani, Gut, Jacobs, and Reich [\emph{SIAM Journal on Discrete Mathematics}
\textbf{40} (2026), 988--999].  An incidence-based reduction to their
bidirected packing--covering theorem preserves the packing value and projects
transversals without increasing their cardinality.
\end{abstract}

\noindent\textbf{Keywords:} cycle packing; cycle transversal; prescribed
vertices; bidirected graph; Erd\H{o}s--P\'osa property

\noindent\textbf{Mathematics Subject Classification:} 05C38, 05C70

\section{Introduction}

Throughout the paper, graphs and digraphs are finite, and graphs are simple
and undirected; we denote them by $G$ and $D$, respectively.  A
\emph{bidirected graph} $B$ is obtained by assigning a sign $+$ or $-$ to each
end of every edge of a finite loopless multigraph.

A feedback vertex set is a vertex set whose deletion leaves an acyclic graph;
Karp included the feedback vertex set problem for digraphs in his 1972 list
\cite{Karp1972}.  The
fundamental connection between feedback vertex sets and cycle packings is the
theorem of Erd\H{o}s and P\'osa
\cite{ErdosPosa1965}: there exists an absolute constant $c$ such that every
graph either contains $k$ vertex-disjoint cycles or has a set of at most
$ck\log k$ vertices meeting every cycle.  This initiated the study of the
Erd\H{o}s--P\'osa property; see Raymond and Thilikos
\cite{RaymondThilikos2017} for a survey.

A natural refinement prescribes a set $S\subseteq V(G)$ and considers only
cycles containing a vertex of $S$, usually called $S$-cycles.  Kakimura,
Kawarabayashi, and Marx \cite{KakimuraKawarabayashiMarx2011} established an
Erd\H{o}s--P\'osa theorem for $S$-cycles with the explicit transversal bound
$40k^{2}\log_2 k$, and Pontecorvi and Wollan \cite{PontecorviWollan2012}
later obtained the classical $O(k\log k)$ order.  These results compare the
number of vertex-disjoint $S$-cycles with the minimum size of an $S$-cycle
transversal, rather than the number of prescribed vertices covered.

Cycles in digraphs have a different history.  Reed, Robertson, Seymour, and
Thomas \cite{ReedRobertsonSeymourThomas1996} proved Younger's conjecture that
cycles in digraphs satisfy an Erd\H{o}s--P\'osa-type packing--covering duality.
For cycles in digraphs through a prescribed set, Kakimura and Kawarabayashi
\cite{KakimuraKawarabayashi2012} established a bounded-fractionality result,
and Kawarabayashi, Kr\'al', Kr\v{c}\'al, and Kreutzer
\cite{KawarabayashiKralKrcalKreutzer2013} obtained a half-integral analogue,
in which each vertex belongs to at most two selected cycles.  The latter work
also shows that the corresponding pairwise vertex-disjoint packing statement
fails in general.

Bowler, Ghorbani, Gut, Jacobs, and Reich \cite{BowlerEtAl2026} studied the number of prescribed vertices or edges covered by packed cycles. If $H$ is
a graph, digraph, or bidirected graph, a cycle of $H$ is understood in the
corresponding sense.  For
$S\subseteq V(H)$, let $\nu^{\mathrm v}_S(H)$ be the maximum number of
vertices of $S$ covered by pairwise vertex-disjoint cycles, and let
$\tau^{\mathrm v}_S(H)$ be the minimum size of a set $X\subseteq V(H)$ that
meets every cycle $C$ with $V(C)\cap S\neq\varnothing$.  For
$F\subseteq E(H)$, let $\nu^{\mathrm e}_F(H)$ be the maximum number of edges
of $F$ covered by pairwise edge-disjoint cycles, and let
$\tau^{\mathrm e}_F(H)$ be the minimum size of a set $Y\subseteq E(H)$ such
that $Y\cap E(C)\neq\varnothing$ for every cycle $C$ with
$E(C)\cap F\neq\varnothing$.

The following theorem collects Theorems~2.1, 2.2, 2.3, and 2.7 and Proposition~2.8 in \cite{BowlerEtAl2026}.
\begin{theorem}\label{thm:bowler-results}
The following statements hold.
\begin{enumerate}[label=\textnormal{(\roman*)},leftmargin=2.4em,itemsep=1pt,
                  topsep=3pt]
\item If $G$ is a graph and $F\subseteq E(G)$, then
      $\tau^{\mathrm e}_F(G)\leq\nu^{\mathrm e}_F(G)$
      \cite[Theorem~2.1]{BowlerEtAl2026}.
\item If $D$ is a digraph and $F\subseteq E(D)$, then
      $\tau^{\mathrm e}_F(D)\leq\nu^{\mathrm e}_F(D)$
      \cite[Theorem~2.2]{BowlerEtAl2026}.
\item If $B$ is a bidirected graph and $S\subseteq V(B)$, then
      $\tau^{\mathrm v}_S(B)\leq\nu^{\mathrm v}_S(B)$
      \cite[Theorem~2.3]{BowlerEtAl2026}.
\item If $D$ is a digraph and $S\subseteq V(D)$, then
      $\tau^{\mathrm v}_S(D)\leq\nu^{\mathrm v}_S(D)$
      \cite[Theorem~2.7]{BowlerEtAl2026}.
\item For every positive integer $k$, there exist a bidirected graph
      $B_k$ and $F_k\subseteq E(B_k)$ such that
      $\nu^{\mathrm e}_{F_k}(B_k)\leq 1$ and
      $\tau^{\mathrm e}_{F_k}(B_k)>k$
      \cite[Proposition~2.8]{BowlerEtAl2026}.
\end{enumerate}
\end{theorem}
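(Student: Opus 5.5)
This theorem only collects results already proved in \cite{BowlerEtAl2026}, so the plan is to verify that each item matches the cited statement rather than to re-derive it. For each of (i)--(v) we check that the parameters $\nu^{\mathrm v}_S,\tau^{\mathrm v}_S,\nu^{\mathrm e}_F,\tau^{\mathrm e}_F$ defined above agree with those of \cite{BowlerEtAl2026}. This includes the convention that in the bidirected setting a cycle is a closed walk that is consistently signed at every vertex it passes through. With that checked, each item is exactly the indicated Theorem~2.1, 2.2, 2.3, 2.7 or Proposition~2.8.

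Some internal consistency can also be checked directly, which guards against convention mismatches. For (iv) we derive it from (iii). Encode a digraph $D$ as a bidirected graph $B$ on the same vertex set by giving each arc $uv$ the sign $+$ at its tail $u$ and $-$ at its head $v$. A bidirected cycle must enter and leave each vertex through ends of opposite signs. Under this encoding that means leaving along an out-arc and arriving along an in-arc, so the cycles of $B$ are exactly the directed cycles of $D$. Hence $\nu^{\mathrm v}_S(B)=\nu^{\mathrm v}_S(D)$ and $\tau^{\mathrm v}_S(B)=\tau^{\mathrm v}_S(D)$, and (iii) yields (iv).

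Similarly, (i) is a special case of (ii), obtained by replacing each edge with a pair of opposite arcs. Here care is needed, because the digon formed by such a pair is not an undirected cycle. So I would rather cite Theorem~2.1 directly than attempt this reduction. For (v) we only need to confirm that $B_k$ and $F_k$ from Proposition~2.8 satisfy the stated inequalities in our notation.

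The main obstacle is purely definitional: making sure that ``cycle'' in a bidirected graph means the same thing here as in \cite{BowlerEtAl2026}, in particular how repeated vertices and digons formed by parallel edges are handled. The later reduction in this paper relies on item (iii) in exactly that sense, so I would check the definition of a bidirected cycle against the cited paper first.
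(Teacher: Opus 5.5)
Your plan matches the paper, which gives no proof of this theorem and simply collects Theorems~2.1, 2.2, 2.3, 2.7 and Proposition~2.8 of \cite{BowlerEtAl2026}; the only adjustment to your definitional check is that Definition~\ref{def:bidirected} takes a bidirected cycle to be a genuine cycle of the underlying multigraph (no repeated vertices) with opposite signs at each vertex, not an arbitrary consistently signed closed walk. Your side derivation of (iv) from (iii), signing each arc $+$ at its tail and $-$ at its head, is a sound consistency check: digons become admissible length-two bidirected cycles, while parallel arcs in the same direction do not form one. It does require that loops of $D$, if allowed, be subdivided first, since bidirected graphs here have loopless underlying multigraphs.
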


The vertex version for graphs remained open.  Replacing each edge by two
oppositely oriented arcs does not resolve it, because the resulting digraph
contains $2$-cycles absent from the original graph.  For a graph $G$, write
$\nu_S(G):=\nu^{\mathrm v}_S(G)$ and $\tau_S(G):=\tau^{\mathrm v}_S(G)$. In \cite{BowlerEtAl2026}, the authors asked the following question.

\begin{question}\label{conj:bowler}
For every graph $G$ and every $S\subseteq V(G)$,
\[
  \tau_S(G)\leq \nu_S(G).
\]
\end{question}

We resolve this question in the following theorem.

\begin{theorem}\label{thm:main}
For every graph $G$ and every $S\subseteq V(G)$,
\[
  \tau_S(G)\leq \nu_S(G).
\]
\end{theorem}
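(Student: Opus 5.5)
The plan is to deduce the statement from Theorem~\ref{thm:bowler-results}(iii) by encoding $G$ as a bidirected graph $B$ with a prescribed set $S'\subseteq V(B)$. The encoding must satisfy two properties: $\nu^{\mathrm v}_{S'}(B)\le\nu_S(G)$, and every transversal $X'$ of the $S'$-cycles of $B$ projects to a set $X\subseteq V(G)$ with $|X|\le|X'|$ meeting every $S$-cycle of $G$. Given these, $\tau_S(G)\le\tau^{\mathrm v}_{S'}(B)\le\nu^{\mathrm v}_{S'}(B)\le\nu_S(G)$. The naive reduction to digraphs fails because of $2$-cycles, so the whole point of the construction is to kill the backtracking cycles that arise from traversing one edge of $G$ back and forth.

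I would build $B$ from the incidence structure of $G$. Its vertex set is $V(G)\cup E(G)$, with one vertex $x_e$ per edge $e$. For each incidence $v\in e$ we put signed edges between $v$ and $x_e$. The signs are chosen so that passing through $x_e$ forces the walk to arrive from one end of $e$ and leave towards the other end. The signs at an original vertex $v$ must still allow any two distinct edges at $v$ to be used consecutively, since the cycle condition at $v$ requires the two incident edge-ends to carry opposite signs. I would choose the signs at $x_e=x_{uv}$ as follows: every $u$-side edge has one sign at $x_e$, and every $v$-side edge has the other. With this choice a bidirected cycle through $x_e$ must use exactly one $u$-side and one $v$-side edge, so it cannot turn around at $x_e$. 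At the original vertices I would attach, for each incidence, a $+$ copy and a $-$ copy. Then every cycle $v_1v_2\cdots v_\ell v_1$ of $G$ lifts to a bidirected cycle $v_1x_{v_1v_2}v_2\cdots$ by choosing signs alternately. I would put $S'=S$.

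The key lemmas come next, in this order. First, every bidirected cycle $C'$ of $B$ projects to a closed walk in $G$ that never immediately reverses an edge; hence it contains a cycle of $G$ whose vertex set lies in $V(C')\cap V(G)$. Second, I need that this cycle can be chosen to contain every vertex of $S\cap V(C')$, or at least that vertex-disjoint cycles of $B$ yield vertex-disjoint cycles of $G$ covering at least as many vertices of $S$. Vertex-disjointness of the projections is automatic, because the original vertices are shared. Third, for the projection of a transversal $X'$, I keep $X'\cap V(G)$ and replace each $x_e\in X'$ by one endpoint of $e$. Any $S$-cycle of $G$ whose lift meets $x_e$ passes through $e$ and hence through both endpoints, so the projected set $X$ still meets it and $|X|\le|X'|$. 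This direction also uses that every $S$-cycle of $G$ lifts to an $S'$-cycle of $B$.

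The main obstacle is the second lemma. A bidirected cycle may revisit an original vertex: it is vertex-simple in $B$, so it cannot literally repeat $v$, but with the parallel signed copies one must check this carefully. If repetition is possible, the projected closed walk may be a union of several cycles, and the $S$-vertices covered by $C'$ may be spread over pieces that share a vertex. My first attempt is to choose the sign pattern so that each original vertex is visited at most once by any bidirected cycle; vertex-simplicity in $B$ should give this directly once parallel copies are modelled as distinct edges rather than distinct vertices. In that case the projection of a non-backtracking cycle is a genuine cycle of $G$ on the same original vertices. The remaining danger is a length-$2$ bidirected cycle $v\,x_e\,v$ formed by parallel edges. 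The fixed sign at $x_e$ on each side is exactly designed to exclude it, and verifying this exclusion is the crux of the argument.
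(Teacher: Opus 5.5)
Your proof is correct, and it follows the paper's overall plan: encode $G$ as a bidirected graph, apply Lemma~\ref{lem:bidirected}, use only $\nu_{S'}(B)\le\nu_S(G)$, and project transversals.

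\textbf{Where the routes differ: the gadget.} You keep $V(G)\subseteq V(B)$ and add one vertex $x_e$ per edge. You join $v$ to $x_e$ by two parallel edges, signed $+$ and $-$ at $v$ but sharing a common $v$-side sign at $x_e$. The paper instead replaces every vertex and every edge of $G$ by a pair $z^0z^1$ with internal signs $-,-$. It also inserts incidence vertices $p_{v,e}$ signed $+$ towards the vertex pair and $-$ towards the edge pair.

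\textbf{How the pieces correspond.} Your $\pm$ copies at $v$ play the role of the paper's vertex pair: any two incidences at $v$ can be joined, and $v$ is visited at most once. Your opposite signs at $x_e$ play the role of the edge pair together with $p_{u,e},p_{v,e}$: a cycle must cross from $u$ to $v$.

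\textbf{What your route buys.} Because $B$ is bipartite between $V(G)$ and $\{x_e\}$, the analogue of Lemma~\ref{lem:cycle-correspondence} is nearly immediate.
\begin{itemize}
\item Vertex-simplicity in $B$ gives one visit per original vertex.
\item The signs at $x_e$ exclude $v\,x_e\,v$.
\item The case $v_1x_ev_2x_fv_1$ with $e\neq f$ is excluded because $G$ is simple. You should state this explicitly to get length at least $3$.
\end{itemize}
So the projection is a cycle on exactly $V(C')\cap V(G)$. You can take $S'=S$ instead of $T=\{v^0:v\in S\}$. Your transversal projection only has to move the $x_e$, whereas the paper's map $\phi$ must also treat $v^i$ and $p_{v,e}$. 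The ``main obstacle'' you flag is therefore already resolved by your own observations.

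\textbf{What the paper's route buys.} Its heavier gadget gives an underlying graph with no parallel edges. It would therefore survive even if the bidirected theorem were only available for simple underlying graphs. Your reduction relies on parallel edges, but these are explicitly permitted by Definition~\ref{def:bidirected} and Lemma~\ref{lem:bidirected}.
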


The proof uses the bidirected theorem of Bowler et al.  Each vertex and edge
of $G$ is represented by an auxiliary pair.  For each incidence $(v,e)$, an
auxiliary vertex joins the pair representing $v$ to the pair representing
$e$.  The assigned signs force bidirected cycles to alternate
between vertex and edge pairs, giving a cycle correspondence that preserves
vertex-disjointness and the number of covered vertices of $S$.  A vertex map
then projects auxiliary transversals without increasing their size.  The
gadget for an edge $e=uv$ appears in Figure~\ref{fig:edge-gadget}.

Section~\ref{sec:prelim} fixes terminology and records the bidirected theorem.
Section~\ref{sec:construction} constructs the auxiliary graph and proves the
cycle correspondence; Section~\ref{sec:proof} completes the proof of
Theorem~\ref{thm:main}.

\section{Terminology and Preliminary Results}\label{sec:prelim}

We use the terminology of Diestel \cite{Diestel2017} and the conventions
stated in the Introduction.  A multigraph may have parallel edges.  If
$e=uv$, then $u$ and $v$ are its \emph{endvertices}.  A vertex $v$ and an
edge $e$ are \emph{incident} if $v$ is an endvertex of $e$; the pair $(v,e)$
is then an \emph{incidence}.  For a multigraph $H$, let $I(H)$ denote its set
of incidences.

For any collection $\cC$ of cycles, set
$V(\cC):=\bigcup_{C\in\cC}V(C)$ and
$E(\cC):=\bigcup_{C\in\cC}E(C)$.  Two cycles are
\emph{vertex-disjoint}, respectively \emph{edge-disjoint}, if their vertex
sets, respectively edge sets, are disjoint.  A \emph{cycle packing} is a
collection of pairwise vertex-disjoint cycles.  A vertex set $X$ \emph{meets}
a subgraph $H$ if $X\cap V(H)\neq\varnothing$.  Whenever
$x_0,\ldots,x_r$ occur consecutively on a path in this order,
$x_0x_1\cdots x_r$ denotes the corresponding subpath; its edges are
$x_{j-1}x_j$ for $1\leq j\leq r$.

A family $\mathcal{F}$ of graphs has the \emph{vertex Erd\H{o}s--P\'osa
property} if there is a function $f$ such that, for every graph $G$ and every
positive integer $k$, either $G$ contains $k$ pairwise vertex-disjoint
subgraphs isomorphic to members of $\mathcal{F}$, or a set of at most $f(k)$
vertices of $G$ meets every such subgraph.

\begin{definition}\label{def:parameters}
Let $G$ be a graph and let $S\subseteq V(G)$.  An \emph{$S$-cycle} is a cycle
$C$ satisfying $V(C)\cap S\neq\varnothing$.  An \emph{$S$-cycle transversal}
is a set $X\subseteq V(G)$ that meets every $S$-cycle; when $S=V(G)$, we call
$X$ a \emph{cycle transversal}.  Let $\nu_S(G)$ be the maximum of
$|S\cap V(\cC)|$ over all cycle packings $\cC$ in $G$, and let $\tau_S(G)$
be the minimum cardinality of an $S$-cycle transversal.  The empty packing is
permitted.  Hence both parameters are zero when $G$ has no $S$-cycle.
\end{definition}
We use the standard bidirected graph convention described in
\cite{BowlerEtAlMenger2023,BowlerEtAl2026}.

\begin{definition}\label{def:bidirected}
A \emph{bidirected graph} $B=(H,\sigma)$ consists of a loopless multigraph
$H$, called its \emph{underlying multigraph}, and a signing $\sigma$ that
assigns a sign $\sigma(x,f)\in\{+,-\}$ to every \emph{half-edge} $(x,f)$,
where $(x,f)\in I(H)$.  A cycle of $H$ is a \emph{bidirected cycle} if, at
each of its vertices, the two half-edges belonging to the cycle have opposite
signs; two parallel edges may form a cycle of length two.  The terms cycle packing, $S$-cycle,
$S$-cycle transversal, $\nu_S$, and $\tau_S$ then refer only to bidirected
cycles.
\end{definition}

In Section~\ref{sec:construction}, $z^0,z^1$ are the \emph{object vertices}
associated with $z$, $\{z^0,z^1\}$ its \emph{associated pair}, and
$b_z=z^0z^1$ its \emph{internal edge}; for each $(v,e)\in I(G)$,
$p_{v,e}$ is the \emph{incidence vertex} associated with $(v,e)$.  An
associated pair is a \emph{vertex pair}
for $z\in V(G)$ and an \emph{edge pair} for $z\in E(G)$.
We shall use \cite[Theorem~2.3]{BowlerEtAl2026}, stated below.  It is the only
external theorem used in our proof of Theorem~\ref{thm:main}.

\begin{lemma}\label{lem:bidirected}
For every bidirected graph $B$ and every $T\subseteq V(B)$, we have
$\tau_T(B)\leq \nu_T(B)$.
\end{lemma}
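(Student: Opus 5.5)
Since Lemma~\ref{lem:bidirected} is quoted from \cite[Theorem~2.3]{BowlerEtAl2026}, the paper simply cites it. If I had to prove it, I would reduce bidirected cycle packings to perfect matchings and then apply matching duality. The step I am least sure of is the final rounding, described at the end.

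\textbf{Step 1: the split graph.} For $B=(H,\sigma)$ I would form an ordinary multigraph $M(B)$. Each vertex $x$ of $B$ is replaced by two vertices $x^+$ and $x^-$, joined by a \emph{rest edge} $r_x=x^+x^-$. Each edge $f=xy$ of $H$ becomes an edge joining $x^{\sigma(x,f)}$ to $y^{\sigma(y,f)}$.

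\textbf{Step 2: packings are perfect matchings.} I would check that cycle packings of $B$ correspond bijectively to perfect matchings of $M(B)$.
\begin{itemize}
\item Suppose a perfect matching avoids the rest edge $r_x$. Then $x^+$ and $x^-$ are covered by two non-rest edges, whose half-edges at $x$ have opposite signs.
\item Following these edges traces closed trails that visit every such $x$ exactly once. These are bidirected cycles, possibly of length two through parallel edges, and they are pairwise vertex-disjoint.
\item Conversely, a packing $\cC$ together with the rest edges of all vertices outside $V(\cC)$ is a perfect matching.
\end{itemize}
With weight $w(r_t)=1$ for $t\in T$ and weight $0$ on all other edges, this gives
\[
  \nu_T(B)=|T|-\min\{w(N): N \text{ a perfect matching of } M(B)\}.
\]
Also, a $T$-cycle exists exactly when the all-rest matching $N_0$ has an alternating cycle through some $r_t$ with $t\in T$.

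\textbf{Step 3: use Edmonds' dual.} I would take Edmonds' perfect matching polytope, which has degree constraints and odd-set constraints, and an optimal integral dual $(y,z)$. Here $y$ is a potential on the vertices of $M(B)$ and $z\geq 0$ is a weighting of a laminar family of odd sets. The aim is to define a vertex set $X\subseteq V(B)$ from the tight structure of $(y,z)$.
\begin{itemize}
\item A vertex $x$ should enter $X$ when the potentials on $x^+$ and $x^-$ are unbalanced, or when $r_x$ crosses a blossom carrying positive $z$.
\item The intended size bound is $|X|\le |T|-\min_N w(N)=\nu_T(B)$, by complementary slackness.
\item The intended transversal property: any $T$-cycle avoiding $X$ is an $N_0$-alternating cycle through an $r_t$ whose reduced costs all vanish. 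Flipping it would then produce a second optimal matching that contradicts the choice of the dual.
\end{itemize}

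\textbf{Main obstacle.} The hard part is the rounding in Step~3: turning the fractional, blossom-laden dual into an integral vertex set of $B$ without double-counting. The pairs $\{x^+,x^-\}$ may be split between different blossoms, and each such pair must be charged only once.

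My first attempt would be to avoid blossoms altogether. I would run induction on $|V(B)|$ and use a Gallai--Edmonds decomposition of the subgraph of $M(B)$ formed by the tight edges. The aim is to show that some vertex $x$ lies on every maximum packing in a way that makes $\nu_T(B-x)\le\nu_T(B)-1$; deleting $x$ and applying induction would then finish.

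If that fails, the fallback is a bidirected Menger theorem \cite{BowlerEtAlMenger2023}. I would apply it after splitting each $t\in T$ into a source $t^{+}$ and a sink $t^{-}$, and then argue that a minimum separator projects to an $X$ of the required size.
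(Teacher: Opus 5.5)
The paper gives no proof of this lemma. It is quoted as \cite[Theorem~2.3]{BowlerEtAl2026} and used as a black box, so your first sentence already matches the paper's treatment. The self-contained argument you add, however, has a genuine gap. Steps~1 and~2 are correct: perfect matchings of $M(B)$ are in bijection with cycle packings of $B$, $\nu_T(B)=|T|-\min_N w(N)$, and $T$-cycles correspond to $N_0$-alternating cycles through some $r_t$. But Step~3 never defines $X$ and proves neither of its two properties. Complementary slackness only says that edges of an optimal matching are tight and that a set with $z_S>0$ is crossed exactly once by it. Converting the dual value $\sum_v y_v+\sum_S z_S=|T|-\nu_T(B)$ into $|X|\le\nu_T(B)$ would require charging the vertices of $X$ injectively to $T$-vertices covered by an optimal packing, and that charging is the whole content of the theorem. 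The transversal step yields no contradiction either. Flipping an $N_0$-alternating $T$-cycle merely improves $N_0$, which is not optimal once $\nu_T(B)\geq 1$. Moreover, tight alternating cycles coexist with every optimal dual whenever the optimal matching is not unique.

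Both fallbacks fail as stated. For the induction, let $B$ have vertices $x_1,x_2,x_3$ and, for each pair $i<j$, one edge $x_ix_j$ signed $+$ at both ends and one signed $-$ at both ends; let $T=V(B)$. Along a bidirected cycle the two edge types must alternate, so bidirected cycles have even length. The bidirected cycles of $B$ are therefore exactly the three $2$-cycles, and $\nu_T(B)=\tau_T(B)=2$. For every $k$, the $2$-cycle on the other two vertices avoids $x_k$, so $\nu_T(B-x_k)=2$. Thus no vertex lies on every maximum packing, and deleting one vertex and applying induction gives only $\tau_T(B)\leq 3$. (Here $M(B)$ is a triangular prism in which every rest edge lies in some optimal perfect matching, so the tight-edge structure contains no rest edge of the kind your induction step needs.) For the Menger route, disjoint source--sink paths need not close up into cycles. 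Let $B$ be a single edge $t_1t_2$, signed so that after splitting it joins $t_1^{+}$ to $t_2^{-}$, and let $T=\{t_1,t_2\}$. There is one source--sink path but $\nu_T(B)=\tau_T(B)=0$, so a minimum separator is larger than $\nu_T(B)$. Some device forcing $t^{+}$ to be used exactly when $t^{-}$ is used is needed, and that is where the real difficulty lies. As things stand, your proposal establishes the lemma only through the citation, which is exactly what the paper does.
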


\section{The Auxiliary Bidirected Graph}\label{sec:construction}

Fix a graph $G$ and a set $S\subseteq V(G)$.  We construct a
bidirected graph $B$ from the vertex--edge incidence structure of $G$.  For
each object $z\in V(G)\cup E(G)$, introduce two vertices $z^0,z^1$ joined by
the internal edge $b_z:=z^0z^1$.  For every incidence $(v,e)\in I(G)$,
introduce a vertex $p_{v,e}$.  For each $i\in\{0,1\}$, add the two edges
$p_{v,e}v^i$ and $p_{v,e}e^i$, called \emph{incidence edges}.  Thus
\[
\begin{aligned}
V(B)&=\{z^0,z^1:z\in V(G)\cup E(G)\}
      \cup\{p_{v,e}:(v,e)\in I(G)\},\\
E(B)&=\{b_z:z\in V(G)\cup E(G)\}
      \cup\{p_{v,e}v^i,p_{v,e}e^i:(v,e)\in I(G),\ i\in\{0,1\}\}.
\end{aligned}
\]
Writing $n=|V(G)|$ and $m=|E(G)|$, we have
$|V(B)|=2n+4m$ and $|E(B)|=n+9m$.  Since $G$ has neither loops nor parallel
edges and each incidence has its own vertex, the underlying multigraph of
$B$ also has neither loops nor parallel edges.

Assign the half-edge signs by
\[
\begin{aligned}
  \sigma(z^0,b_z)&=\sigma(z^1,b_z)=-,\\
  \sigma(p_{v,e},p_{v,e}v^i)&=\sigma(v^i,p_{v,e}v^i)=+,\\
  \sigma(p_{v,e},p_{v,e}e^i)&=-,\ \sigma(e^i,p_{v,e}e^i)=+.
\end{aligned}
\]
Thus, at each $z^i$, the half-edge of $b_z$ is the unique incident half-edge
with sign $-$; at $p_{v,e}$, the half-edges leading to the vertex pair have
sign $+$ and those leading to the edge pair have sign $-$.  Finally, set
$T:=\{v^0:v\in S\}\subseteq V(B)$.

\begin{figure}[H]
\centering
\begin{tikzpicture}[
  x=1cm,
  y=1cm,
  object/.style={circle,draw,line width=.7pt,fill=white,minimum size=7mm,
                 inner sep=0pt,font=\small},
  incidence/.style={draw,rounded corners=2pt,line width=.7pt,fill=white,
                    minimum width=14mm,minimum height=6.5mm,inner sep=1pt,
                    font=\small},
  incidence edge/.style={line width=.55pt},
  distinguished edge/.style={line width=1.3pt},
  half sign/.style={font=\scriptsize,fill=white,inner sep=.35pt}
]
\node[object]    (u0) at (-5.0, 1.0) {$u^0$};
\node[object]    (u1) at (-5.0,-1.0) {$u^1$};
\node[incidence] (pu) at (-2.7, 0.0) {$p_{u,e}$};
\node[object]    (e0) at ( 0.0, 1.0) {$e^0$};
\node[object]    (e1) at ( 0.0,-1.0) {$e^1$};
\node[incidence] (pv) at ( 2.7, 0.0) {$p_{v,e}$};
\node[object]    (v0) at ( 5.0, 1.0) {$v^0$};
\node[object]    (v1) at ( 5.0,-1.0) {$v^1$};

\draw[distinguished edge]
  (u0.south) -- node[half sign,pos=.13,left] {$-$}
  node[half sign,pos=.87,left] {$-$}
  node[half sign,pos=.50,right] {$b_u$} (u1.north);
\draw[distinguished edge]
  (e0.south) -- node[half sign,pos=.13,left] {$-$}
  node[half sign,pos=.87,left] {$-$}
  node[half sign,pos=.50,right] {$b_e$} (e1.north);
\draw[distinguished edge]
  (v0.south) -- node[half sign,pos=.13,left] {$-$}
  node[half sign,pos=.87,left] {$-$}
  node[half sign,pos=.50,right] {$b_v$} (v1.north);

\draw[incidence edge]
  (u0.east) -- node[half sign,pos=.13,above,sloped] {$+$}
  node[half sign,pos=.87,above,sloped] {$+$} (pu.north west);
\draw[incidence edge]
  (u1.east) -- node[half sign,pos=.13,below,sloped] {$+$}
  node[half sign,pos=.87,below,sloped] {$+$} (pu.south west);
\draw[incidence edge]
  (pu.north east) -- node[half sign,pos=.13,above,sloped] {$-$}
  node[half sign,pos=.87,above,sloped] {$+$} (e0.west);
\draw[incidence edge]
  (pu.south east) -- node[half sign,pos=.13,below,sloped] {$-$}
  node[half sign,pos=.87,below,sloped] {$+$} (e1.west);
\draw[incidence edge]
  (e0.east) -- node[half sign,pos=.13,above,sloped] {$+$}
  node[half sign,pos=.87,above,sloped] {$-$} (pv.north west);
\draw[incidence edge]
  (e1.east) -- node[half sign,pos=.13,below,sloped] {$+$}
  node[half sign,pos=.87,below,sloped] {$-$} (pv.south west);
\draw[incidence edge]
  (pv.north east) -- node[half sign,pos=.13,above,sloped] {$+$}
  node[half sign,pos=.87,above,sloped] {$+$} (v0.west);
\draw[incidence edge]
  (pv.south east) -- node[half sign,pos=.13,below,sloped] {$+$}
  node[half sign,pos=.87,below,sloped] {$+$} (v1.west);
\end{tikzpicture}
\caption{The gadget for $e=uv$.  All edges introduced for the endvertices
$u$ and $v$ of $e$ are shown; other incidence edges at $u^i$ and $v^i$ are
omitted.  Signs label the
adjacent half-edges, and $b_u,b_e,b_v$ are thick.}
\label{fig:edge-gadget}
\end{figure}

\begin{lemma}\label{lem:block-traversal}
Let $Q$ be a bidirected cycle in $B$.  If $Q$ contains $z^i$ for some
$z\in V(G)\cup E(G)$ and $i\in\{0,1\}$, then $Q$ contains the subpath
$pz^iz^{1-i}q$, where $p$ and $q$ are distinct incidence vertices.  Moreover,
at every
incidence vertex $p_{v,e}$ on $Q$, one incident edge of $Q$ leads to
$v^0$ or $v^1$ and the other leads to $e^0$ or $e^1$.
\end{lemma}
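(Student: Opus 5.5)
The proof is a local sign analysis, based on one observation: at every object vertex $z^i$, the internal edge $b_z$ is the only incident edge whose half-edge at $z^i$ has sign $-$. All incidence edges carry sign $+$ at $z^i$.

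First we handle the object vertices. Let $Q$ be a bidirected cycle containing $z^i$. The two half-edges of $Q$ at $z^i$ must have opposite signs, so exactly one of them has sign $-$, and that one must be $b_z$. Hence $Q$ contains $b_z$ and therefore also $z^{1-i}$. Applying the same argument at $z^{1-i}$, the other edge of $Q$ at $z^{1-i}$ is not $b_z$, so it is an incidence edge. Every neighbour of an object vertex other than its partner is an incidence vertex. So the $Q$-neighbours of $z^i$ and $z^{1-i}$ outside the pair are incidence vertices $p$ and $q$, and $Q$ contains the subpath $p z^i z^{1-i} q$.

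The remaining point is that $p\neq q$. If $p=q$, then $Q$ would be the $3$-cycle $p z^i z^{1-i}$, so $p$ would be adjacent to both $z^0$ and $z^1$. This is possible: $p_{v,e}$ is adjacent to $v^0$ and $v^1$, and to $e^0$ and $e^1$. In that case we check the signs at $p$. If $z=v$ is a vertex, both half-edges at $p_{v,e}$ are $+$. If $z=e$ is an edge, both half-edges at $p_{v,e}$ are $-$. Either way $Q$ fails to be bidirected at $p$, so $p\neq q$. We also note that $Q$ cannot have length $2$, because $B$ has no parallel edges.

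Next we handle the incidence vertices. The neighbours of $p_{v,e}$ are exactly $v^0,v^1,e^0,e^1$. The half-edges at $p_{v,e}$ towards $v^0,v^1$ have sign $+$, and those towards $e^0,e^1$ have sign $-$. Since the two edges of $Q$ at $p_{v,e}$ have opposite signs, one leads to $\{v^0,v^1\}$ and the other to $\{e^0,e^1\}$. This proves the second statement.

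There is no substantial obstacle. The only delicate step is ruling out $p=q$, which the same sign argument at $p$ settles.
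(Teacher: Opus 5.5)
Your proposal is correct and follows essentially the same route as the paper. The uniqueness of the $-$ half-edge at each object vertex forces $b_z$ together with an incidence edge at each end, the case $p=q$ is excluded because both edges of $Q$ at $p$ would carry the same sign there, and the claim at $p_{v,e}$ follows from the $+$/$-$ split of its half-edges. Your extra remarks, that $p=q$ would give a $3$-cycle and that length two is impossible, are correct but not needed.
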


\begin{proof}
At $z^i$, the edge $b_z$ is the unique incident edge with sign $-$.  Since the
two cycle edges at $z^i$ must have different signs, $Q$ uses $b_z$ and exactly
one edge joining $z^i$ to an incidence vertex $p$ whose half-edge at
$z^i$ has sign $+$.  Applying the same argument at the other endvertex
$z^{1-i}$ of $b_z$, the cycle uses exactly one incidence edge whose half-edge
at $z^{1-i}$ has sign $+$.  Let $q$ be the endvertex of this incidence edge
distinct from $z^{1-i}$.  Hence the indicated subpath lies on $Q$.

Suppose that $p=q$.  If $z\in V(G)$, then both edges of this subpath incident
with $p$ lead to the vertex pair associated with $z$ and consequently both
have sign $+$ at $p$.  If $z\in E(G)$, then both have sign $-$ at $p$.
In either case, the two edges of $Q$ incident with $p$ have the same
half-edge sign at $p$, contrary to Definition~\ref{def:bidirected}.  Thus
$p\neq q$.

Now consider an arbitrary incidence vertex $p_{v,e}$ on $Q$.  Every edge from
$p_{v,e}$ to $v^0$ or $v^1$ has sign $+$ at $p_{v,e}$, and every edge from
$p_{v,e}$ to $e^0$ or $e^1$ has sign $-$.  The two cycle edges at $p_{v,e}$
must have opposite signs.  Therefore one leads to the pair associated with
$v$ and the other leads to the pair associated with $e$.
\end{proof}

\begin{lemma}\label{lem:cycle-correspondence}
For a bidirected cycle $Q$ of $B$, let
$V_Q:=\{v\in V(G):\{v^0,v^1\}\subseteq V(Q)\}$ and
$E_Q:=\{e\in E(G):\{e^0,e^1\}\subseteq V(Q)\}$.  The subgraph of $G$ with
vertex set $V_Q$ and edge set $E_Q$ is a cycle, denoted by $\pi(Q)$.
Conversely, for every cycle $C$ of $G$, there is a bidirected cycle $Q$ of
$B$ satisfying $\pi(Q)=C$.  Moreover,
$|T\cap V(Q)|=|S\cap V(\pi(Q))|$.
\end{lemma}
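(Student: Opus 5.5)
We prove the three assertions in turn, relying throughout on Lemma~\ref{lem:block-traversal}.

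\textbf{$\pi(Q)$ is a cycle.} Let $Q$ be a bidirected cycle of $B$. By Lemma~\ref{lem:block-traversal}, whenever $Q$ meets an associated pair it contains both object vertices of that pair, joined by the internal edge. Every incidence vertex on $Q$ has one cycle edge into a vertex pair and one into an edge pair. Note also that $Q$ contains at least one object vertex, since incidence vertices are pairwise nonadjacent.

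Contract each internal edge $b_z$ on $Q$ to a single point $z$. The cyclic sequence of $Q$ then becomes an alternating cyclic sequence of the form
\[
v_1,\ p_{v_1,e_1},\ e_1,\ p_{v_2,e_1},\ v_2,\ p_{v_2,e_2},\ e_2,\ \dots
\]
Here each object $z$ is traversed exactly once, because $Q$ visits $z^0$ and $z^1$ only through the subpath $pz^iz^{1-i}q$. Each edge object $e_j$ is entered from $p_{x,e_j}$ and left through $p_{y,e_j}$ with $p_{x,e_j}\neq p_{y,e_j}$, which forces $x\neq y$. Hence $x$ and $y$ are exactly the two endvertices of $e_j$, and consecutive vertex objects around $e_j$ are its endvertices.

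So $V_Q$ and $E_Q$ form a closed walk $v_1e_1v_2e_2\cdots v_k e_k v_1$ in $G$ with no repeated vertices or edges. Its length is at least $2$ because $Q$ contains an object vertex. Since $G$ is simple, it cannot have length $2$: that would require two distinct edges with the same endvertices. Hence the walk has length at least $3$, and $\pi(Q)$ is a cycle.

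\textbf{Every cycle of $G$ lifts.} Let $C=v_1e_1v_2\cdots v_ke_kv_1$ with $e_j=v_jv_{j+1}$. Define
\[
Q:\quad v_j^0\, v_j^1\, p_{v_j,e_j}\, e_j^0\, e_j^1\, p_{v_{j+1},e_j}\, v_{j+1}^0\,\cdots
\]
and check the sign condition at each vertex:
\begin{itemize}
\item at $v^i$ and $e^i$, one edge is $b_z$ with sign $-$ and the other is an incidence edge with sign $+$;
\item at $p_{v,e}$, the edge toward the vertex pair has sign $+$ and the edge toward the edge pair has sign $-$.
\end{itemize}
The vertices of $Q$ are distinct because $C$ is a cycle and the incidences used are distinct. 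Thus $Q$ is a bidirected cycle with $\pi(Q)=C$.

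\textbf{The counting identity.} By the first part, $v^0\in V(Q)$ holds if and only if $\{v^0,v^1\}\subseteq V(Q)$, which in turn holds if and only if $v\in V(\pi(Q))$. Therefore $T\cap V(Q)$ is in bijection with $S\cap V(\pi(Q))$.

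The main obstacle is the first part: showing that the contracted sequence really is a cycle and not something degenerate. This comes down to the distinctness $p\neq q$ from Lemma~\ref{lem:block-traversal}, applied at edge pairs, together with the simplicity of $G$.
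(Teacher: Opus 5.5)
Your proof is correct and follows essentially the same route as the paper. Lemma~\ref{lem:block-traversal} forces whole-pair traversal and vertex/edge alternation, the distinctness $p\neq q$ at an edge pair supplies both endvertices, simplicity of $G$ excludes length two, the lift is built explicitly and its signs checked, and the count comes from $v^0\in V(Q)\iff\{v^0,v^1\}\subseteq V(Q)$. The only differences are cosmetic: you read off a closed walk with no repeated objects after contracting the internal edges, where the paper argues that $(V_Q,E_Q)$ is connected and $2$-regular, and you fix the traversal $z^0\to z^1$ in the lift where the paper allows arbitrary $\alpha_j,\beta_j$.
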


\begin{proof}
Every edge of $B$ has an endvertex of the form $z^i$.  Hence every cycle of $B$
contains an object vertex.  By Lemma~\ref{lem:block-traversal}, whenever $Q$
contains one vertex in a pair associated with $z$, it contains both vertices
of that pair and the internal edge $b_z$.  At every incidence vertex, the same
lemma forces $Q$ to pass between a vertex pair and an edge pair.  Consequently,
the pairs encountered cyclically along $Q$ alternate between pairs associated
with vertices of $G$ and pairs associated with edges of $G$.

Let $e=uv\in E(G)$.  The pair $\{e^0,e^1\}$ is adjacent only to
$p_{u,e}$ and $p_{v,e}$.  If $\{e^0,e^1\}\subseteq V(Q)$, then
Lemma~\ref{lem:block-traversal} gives a subpath
$pe^ie^{1-i}q$ of $Q$ in which $p$ and $q$ are distinct incidence
vertices.  It follows that $\{p,q\}=\{p_{u,e},p_{v,e}\}$.  Applying the final
assertion of Lemma~\ref{lem:block-traversal} at $p_{u,e}$ and $p_{v,e}$ shows
that both vertex pairs associated with $u$ and $v$ occur on $Q$.

Similarly, if the vertex pair associated with $v$ occurs on $Q$, then
Lemma~\ref{lem:block-traversal} gives a subpath
$p_{v,e}v^iv^{1-i}p_{v,f}$ for two distinct edges $e$ and $f$ incident
with $v$.  Thus exactly two members of $E_Q$ are incident with each member of
$V_Q$.  Reading the associated pairs cyclically along $Q$ also shows that the
subgraph $(V_Q,E_Q)$ is connected.  Hence it is a connected $2$-regular
subgraph of $G$.  Since $G$ has neither loops nor parallel edges, it has at
least three vertices and is therefore a cycle.  The sets $V_Q$ and $E_Q$
determine it uniquely; this proves that $\pi(Q)$ is well defined.

For the converse, write $C=v_1e_1v_2e_2\cdots v_ke_kv_1$, where $k\geq3$,
$e_j=v_jv_{j+1}$, and all indices are taken modulo $k$; in particular,
$e_0=e_k$.  For each $j\in\{1,\ldots,k\}$, choose
$\alpha_j,\beta_j\in\{0,1\}$.  In cyclic order of $j$, concatenate the $2k$
paths $p_{v_j,e_{j-1}}v_j^{\alpha_j}v_j^{1-\alpha_j}p_{v_j,e_j}$ and
$p_{v_j,e_j}e_j^{\beta_j}e_j^{1-\beta_j}p_{v_{j+1},e_j}$.  Cyclically
consecutive paths have exactly their indicated common endvertex, while all
other vertices are distinct because $C$ is a cycle.  Their union is connected
and every vertex in it has degree two; it is therefore a cycle in the
underlying multigraph of $B$.  Denote this cycle by $Q_C$.

At each object vertex on this cycle, the half-edge belonging to the internal
edge has sign $-$, while the half-edge belonging to the incidence edge has
sign $+$.  At each incidence vertex $p_{v_j,e_j}$, one cycle edge leads to a
vertex pair and has half-edge sign $+$ there, while the other leads to an edge
pair and has half-edge sign $-$ there.  The two relevant half-edge signs are
therefore different at every vertex, so Definition~\ref{def:bidirected} shows
that $Q_C$ is bidirected.  Its associated vertex and edge sets are precisely
those of $C$, and hence $\pi(Q_C)=C$.

Finally, the first part of the proof shows that $Q$ contains $v^0$ if and only
if it contains the whole pair $\{v^0,v^1\}$, which holds if and only if
$\pi(Q)$ contains $v$.  Since $T$ contains precisely one auxiliary vertex
$v^0$ for each $v\in S$, summing this equivalence over $v\in S$ yields
$|T\cap V(Q)|=|S\cap V(\pi(Q))|$.
\end{proof}

\section{Packing and Transversals}\label{sec:proof}

We first show that the auxiliary construction preserves the packing value.

\begin{lemma}\label{lem:packing}
For the auxiliary bidirected graph $B$ and the set $T$ defined above, we have
$\nu_T(B)=\nu_S(G)$.
\end{lemma}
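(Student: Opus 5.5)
We prove the two inequalities $\nu_T(B)\ge\nu_S(G)$ and $\nu_T(B)\le\nu_S(G)$ separately. Both rely on Lemma~\ref{lem:cycle-correspondence} together with one structural fact: the vertex set of a bidirected cycle $Q$ is determined by $\pi(Q)$. Specifically, we claim
\[
V(Q)=\{z^0,z^1: z\in V(\pi(Q))\cup E(\pi(Q))\}\cup\{p_{v,e}: v\in V(\pi(Q)),\ e\in E(\pi(Q)),\ (v,e)\in I(G)\}.
\]
To see this, recall that the proof of Lemma~\ref{lem:cycle-correspondence} shows that every object vertex of $Q$ lies in a pair fully contained in $Q$; hence it belongs to $V_Q$ or $E_Q$. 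By Lemma~\ref{lem:block-traversal}, every incidence vertex $p_{v,e}$ on $Q$ has one neighbour on $Q$ in the pair of $v$ and one in the pair of $e$. Therefore $v\in V_Q$ and $e\in E_Q$. The reverse inclusion is not needed; only the inclusion ``$\subseteq$'' matters.

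\textbf{Lower bound.} Take a cycle packing $\cC$ in $G$ with $|S\cap V(\cC)|=\nu_S(G)$. For each $C\in\cC$, let $Q_C$ be the bidirected cycle supplied by the converse part of Lemma~\ref{lem:cycle-correspondence}; its vertices are exactly the object vertices of the vertices and edges of $C$, together with the incidence vertices $p_{v,e}$ with $v\in V(C)$ and $e\in E(C)$.

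Now take distinct $C,C'\in\cC$. They are vertex-disjoint in $G$, and hence also edge-disjoint. Consequently no object pair and no incidence vertex is shared between $Q_C$ and $Q_{C'}$, so $\{Q_C\}$ is a cycle packing in $B$. Since $\pi(Q_C)=C$, the final equality of Lemma~\ref{lem:cycle-correspondence} gives $|T\cap V(Q_C)|=|S\cap V(C)|$. Summing over the disjoint cycles yields $\nu_T(B)\ge\nu_S(G)$.

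\textbf{Upper bound.} Take a cycle packing $\cQ$ in $B$ attaining $\nu_T(B)$, and consider the family $\{\pi(Q):Q\in\cQ\}$. Two members $Q\ne Q'$ of $\cQ$ are vertex-disjoint. If $\pi(Q)$ and $\pi(Q')$ shared a vertex $v$, then by the structural fact both $Q$ and $Q'$ would contain $v^0$, which is impossible. Hence the cycles $\pi(Q)$ are pairwise vertex-disjoint, and in particular pairwise distinct, so they form a cycle packing in $G$. Using $|T\cap V(Q)|=|S\cap V(\pi(Q))|$ and disjointness on both sides, we get $|T\cap V(\cQ)|=|S\cap V(\pi(\cQ))|\le\nu_S(G)$.

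The only step requiring care is the structural fact: that the vertex set of $Q$, and in particular which pairs it meets, is read off from $\pi(Q)$. This was essentially established in the proof of Lemma~\ref{lem:cycle-correspondence}, so I would simply cite it. The rest is bookkeeping.
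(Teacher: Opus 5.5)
Your proof is correct and follows the paper's argument. You project a packing in $B$ via $\pi$, and you lift a packing in $G$ via Lemma~\ref{lem:cycle-correspondence}; your ``structural fact'' spells out the disjointness of associated pairs and incidence vertices that the paper asserts in one line. One small slip: in the upper bound, the step ``both $Q$ and $Q'$ would contain $v^0$'' follows directly from the definition of $V_Q$, which is the reverse inclusion for object vertices. It does not follow from the inclusion ``$\subseteq$'' that you said was the only one needed.
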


\begin{proof}
Let $\cQ$ be a collection of pairwise vertex-disjoint bidirected cycles in
$B$.  Suppose that two distinct cycles $Q_1,Q_2\in\cQ$ had projections that
shared a vertex $v\in V(G)$.  By Lemma~\ref{lem:cycle-correspondence}, both
$Q_1$ and $Q_2$ would contain $v^0$ and $v^1$, contradicting the
vertex-disjointness of $\cQ$.  Therefore
$\{\pi(Q):Q\in\cQ\}$ is a collection of pairwise vertex-disjoint cycles in
$G$.  The cycles in $\cQ$ are disjoint, and Lemma~\ref{lem:cycle-correspondence}
gives
\[
\begin{split}
  |T\cap V(\cQ)|
  &=\sum_{Q\in\cQ}|T\cap V(Q)|\\
  &=\sum_{Q\in\cQ}|S\cap V(\pi(Q))|
   =\left|S\cap V\bigl(\{\pi(Q):Q\in\cQ\}\bigr)\right|\\
  &\leq \nu_S(G).
\end{split}
\]
Taking the maximum over all admissible $\cQ$ proves
$\nu_T(B)\leq\nu_S(G)$.

Conversely, let $\cC$ be a collection of pairwise vertex-disjoint cycles in
$G$.  For every $C\in\cC$, choose a bidirected cycle $Q_C$
given by Lemma~\ref{lem:cycle-correspondence} such that $\pi(Q_C)=C$.  Two
cycles in $\cC$ share no vertex and hence share neither an edge nor an
incidence.  Therefore the cycles $Q_C$ use disjoint associated pairs and
disjoint incidence vertices, so they are pairwise vertex-disjoint in $B$.
The equality in Lemma~\ref{lem:cycle-correspondence} shows that the collection
$\{Q_C:C\in\cC\}$ covers exactly $|S\cap V(\cC)|$ vertices of $T$.
Maximizing over $\cC$ proves $\nu_T(B)\geq\nu_S(G)$.
\end{proof}

We next project an arbitrary transversal in $B$ to a transversal in $G$.

\begin{lemma}\label{lem:transversal}
If $X_B\subseteq V(B)$ meets every bidirected $T$-cycle in $B$, then there is
an $S$-cycle transversal $X_G\subseteq V(G)$ satisfying
$|X_G|\leq|X_B|$.
\end{lemma}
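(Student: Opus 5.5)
We define a projection map $\phi\colon V(B)\to V(G)$ and set $X_G:=\phi(X_B)$, which immediately gives $|X_G|\leq|X_B|$. The map is:
\begin{itemize}
\item $\phi(v^i)=v$ for $v\in V(G)$;
\item $\phi(p_{v,e})=v$ for every incidence $(v,e)\in I(G)$;
\item for an edge $e=uv$, $\phi(e^i)$ is one fixed endvertex of $e$, say $u$, chosen arbitrarily.
\end{itemize}
The defining property we need is that for every cycle $C$ of $G$ and every $x\in V(Q_C)$, where $Q_C$ is the lift from Lemma~\ref{lem:cycle-correspondence}, we have $\phi(x)\in V(C)$. This holds by construction. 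Each vertex of $Q_C$ is one of the following:
\begin{itemize}
\item an object vertex $v_j^{i}$ with $v_j\in V(C)$;
\item an object vertex $e_j^{i}$, where $e_j$ is an edge of $C$, so both endvertices of $e_j$ lie on $C$;
\item an incidence vertex $p_{v_j,e}$ with $v_j\in V(C)$.
\end{itemize}
In every case the image under $\phi$ lies on $C$.

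To show that $X_G$ meets every $S$-cycle, let $C$ be an $S$-cycle of $G$ and fix $s\in S\cap V(C)$. By Lemma~\ref{lem:cycle-correspondence} there is a bidirected cycle $Q_C$ with $\pi(Q_C)=C$, and the equality $|T\cap V(Q_C)|=|S\cap V(C)|\geq1$ shows that $Q_C$ is a bidirected $T$-cycle; explicitly, $s^0\in V(Q_C)$. Hence $X_B$ contains some vertex $x\in V(Q_C)$. By the property above, $\phi(x)\in V(C)$, so $X_G\cap V(C)\neq\varnothing$. Since $C$ was an arbitrary $S$-cycle, $X_G$ is an $S$-cycle transversal.

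The only real care point is that an arbitrary vertex of $X_B$ must not project to something useless. A vertex of $X_B$ may lie only on cycles of $B$ whose projection does not contain $\phi(x)$, but this does not matter: we only use cycles of the form $Q_C$ with $C$ given. Concretely, the lifts $Q_C$ consist exactly of the vertices $v^i$ for $v\in V(C)$, $e^i$ for $e\in E(C)$, and $p_{v,e}$ for incidences with $v\in V(C)$ and $e\in E(C)$. Every such vertex is mapped by $\phi$ into $V(C)$, so the containment $\phi(V(Q_C))\subseteq V(C)$ holds regardless of the arbitrary choice of endvertex for the $e^i$. There is therefore no genuine obstacle; the step deserving the most attention is checking this containment explicitly against the construction in the proof of Lemma~\ref{lem:cycle-correspondence}.
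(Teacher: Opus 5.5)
Your proof is correct and is essentially the paper's argument: the same projection $\phi$ (sending $e^i$ to a fixed endvertex $r(e)$), $X_G:=\phi(X_B)$, and lifting each $S$-cycle $C$ to a bidirected $T$-cycle via Lemma~\ref{lem:cycle-correspondence}. The only difference is that you check $\phi(V(Q_C))\subseteq V(C)$ by inspecting the explicit lift, whereas the paper proves the stronger claim $\phi(x)\in V(\pi(Q))$ for every bidirected cycle $Q$ through $x$ using Lemma~\ref{lem:block-traversal}; that stronger claim also shows the situation you raise in your last paragraph never occurs.
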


\begin{proof}
For every edge $e=uv\in E(G)$, fix one endvertex $r(e)\in\{u,v\}$.  Define
$\phi:V(B)\to V(G)$ by $\phi(v^i)=v$, $\phi(p_{v,e})=v$, and
$\phi(e^i)=r(e)$, where $v\in V(G)$, $e\in E(G)$, and $i\in\{0,1\}$.

We claim that $\phi(x)\in V(\pi(Q))$ for every bidirected cycle $Q$ in $B$
and every $x\in V(Q)$.  There are three cases.  If $x=v^i$, then
Lemma~\ref{lem:block-traversal} forces $Q$ to use the entire pair associated
with $v$, so $v\in V(\pi(Q))$.  If $x=p_{v,e}$, then the final assertion of
Lemma~\ref{lem:block-traversal} shows that one of the two cycle edges at
$p_{v,e}$ leads to $v^0$ or $v^1$.  Hence $Q$ uses the pair associated with
$v$, and again $v\in V(\pi(Q))$.  If $x=e^i$, then $Q$ uses the entire pair
associated with $e$.  The projected cycle consequently traverses $e$ and
contains both endvertices of $e$, including $r(e)$.  This proves the claim.

Set $X_G:=\phi(X_B)$.  Then $|X_G|=|\phi(X_B)|\leq|X_B|$.  Let $C$ be an arbitrary
$S$-cycle of $G$.  By Lemma~\ref{lem:cycle-correspondence}, there is a
bidirected cycle $Q$ in $B$ such that $\pi(Q)=C$.  Because $C$ contains a
vertex $v\in S$, the definition of $\pi(Q)$ implies that
$\{v^0,v^1\}\subseteq V(Q)$; in particular, $v^0\in T$.  Thus $Q$ is a
$T$-cycle.  The hypothesis on $X_B$
gives a vertex $x\in X_B\cap V(Q)$.  The preceding case analysis then yields
$\phi(x)\in X_G\cap V(\pi(Q))=X_G\cap V(C)$.  Hence $X_G$ meets $C$.  Since
$C$ was arbitrary, $X_G$ meets every $S$-cycle in $G$.
\end{proof}

\begin{proof}[Proof of Theorem~\ref{thm:main}]
Let $X_B$ be a minimum $T$-cycle transversal of $B$.  By
Definition~\ref{def:parameters} and Lemma~\ref{lem:bidirected},
$|X_B|=\tau_T(B)\leq\nu_T(B)$.
Lemma~\ref{lem:transversal} supplies an $S$-cycle transversal
$X_G\subseteq V(G)$ with $|X_G|\leq|X_B|$.  Using
Lemma~\ref{lem:packing}, we obtain
\[
  \tau_S(G)\leq |X_G|\leq |X_B|
  \leq\nu_T(B)=\nu_S(G).
\]
This is the asserted inequality.
\end{proof}

\paragraph{Declaration on the Use of Generative AI}

The authors used generative AI in including discussion of preliminary formulations and assistance in improving the clarity of the exposition. Any AI-generated suggestion was critically assessed, independently verified, and adopted only at the authors' discretion. The authors take full responsibility for the originality, correctness, and final form of this work.

\end{spacing}

\end{document}